\DeclareMathSymbol{\twoheadrightarrow}  {\mathrel}{AMSa}{"10}
\def\Z{{\mathbb Z}}
\def\End{\mathrm{End}}
\def\Hom{\mathrm{Hom}}
\def\I{{\mathcal I}}
\def\dim{\mathrm{dim}}
                     \def\Mat{\mathrm{Mat}}
\newtheorem{thm}{Theorem}[section]
\newtheorem{prop}[thm]{Proposition}
\theoremstyle{definition}
\newtheorem{rem}[thm]{Remark}
\newtheorem{rems}[thm]{Remarks}
\title[Quaternion trick]
{Abelian varieties, quaternion trick and endomorphisms}
\author[Yuri\ G.\ Zarhin]{Yuri\ G.\ Zarhin}
\address{Department of Mathematics, Pennsylvania State University,
University Park, PA 16802, USA}
 \email{zarhin\char`\@math.psu.edu}
\thanks{The author  was partially supported by Simons Foundation Collaboration grant   \# 585711. Part of this work was done during his stay in 2022 at the Max-Planck Institut f\"ur Mathematik (Bonn, Germany), whose hospitality and support are gratefully acknowledged.}
\begin{document}

\begin{abstract}
The quaternion trick is an explicit construction that associates to a polarized abelian variety $X$ a principal polarization of $(X\times X^t)^4$. The aim of this note is to show that this construction is compatible with endomorphisms of $X$ and $X^t$. See Theorem \ref{mainEq} for a precise statement.

\end{abstract}

\maketitle

\section{Introduction}
Throughout this paper, $K$ is a field. If   $X$ is an abelian variety  over $K$ then we write $\End(X)$ for the ring of
all $K$-endomorphisms of $X$. If $m$ is an integer then we write $m_X$ for the
multiplication by $m$ in $X$; in particular, $1_X$ is the identity map.
(Sometimes we will use notation $m$ instead of $m_X$.)

If $Y$ is an abelian variety  over $K$ then we write $\Hom(X,Y)$ for the group
of all $K$-homomorphisms $X\to Y$.

Let $X$ be an abelian variety over a field $K$ and let $X^t$ be its dual. 
If $u$ is an endomorphism of $X$ then we write $u^t$ for the dual endomorphism of $X^t$.  The corresponding map of the endomorphism rings
$$\End(X) \to \End(X^t),  \ u \mapsto u^t$$
is an {\sl antiisomorphism} of rings. If $m$ is a positive integer then  there are the natural ``diagonal'' ring embeddings
$$\Delta_{m,X}: \End(X) \hookrightarrow \Mat_m(\End(X))=\End(X^m),$$
$$\Delta_{m,X^t}: \End(X^t) \hookrightarrow \Mat_m(\End(X^t))=\End((X^t)^m)=\End((X^m)^t),$$
which send $1_X$ (resp. $1_{X^t}$) to the identity automorphism of $X^m$ (resp. of $(X^t)^m$).
Namely, if $u$ is an endomorphism of $X$ (resp. of $X^t$) then
$\Delta_m(u)$ sends $(x_1,\dots x_m)$ to $(ux_1, \dots, ux_m)$ for all $(x_1,  \dots, x_m)$ in $X^m$ (resp. in $(X^t)^m$).
Clearly, the subring
$\Delta_{m,X}(\End(X))$ of $\End(X^m)$ commutes with the subring
$$\Mat_m(\Z)\subset \Mat_m(\End(X))=\End(X^m).$$

Let $\lambda: X \to X^t$ be a {\sl polarization} on $X$ that is defined over $K$. 
For every positive integer $m$ we consider 
the polarization
\begin{equation}
\label{polM}
\lambda^m: X^m \to (X^m)^t= (X^t)^m, \ (x_1, \dots , x_m)\mapsto (\lambda(x_1), \dots ,
\lambda(x_m))
\end{equation}
of $X^m$ that is also defined over $K$.
 We have
 \begin{equation}
 \label{degLambda}
\dim(X^m)=m\cdot \dim(X), \ \deg(\lambda^m)=\deg(\lambda)^m,
\ \ker(\lambda^m)={\ker(\lambda)}^m \subset X^m.
\end{equation}

The aim of this note is to prove the following assertion.

\begin{thm}
\label{mainEq}
Let $\lambda: X \to X^t$ be a polarization on $X$ that is defined over $K$.
Let $O$ be an associative ring with $1$
endowed with an involutive antiautomorphism
$$O \to O, \ e \mapsto e^{*}.$$
Suppose that we are given a ring embedding
$\iota: O \to \End(X)$ 
that sends $1$ to $1_X$ and such that in $\Hom(X,X^t)$
\begin{equation}
\label{lambdaO}
\lambda \circ \iota(e)= \iota(e^{*})^t \circ\lambda, \ \forall e \in O.
\end{equation}

Then:
\begin{itemize}
\item[(i)] The map
\begin{equation}
\label{iota8}
\iota^{*}: O \to \End(X^t),   e \mapsto \left(\iota(e^{*})\right)^t
\end{equation}
is a ring embedding
that sends $1$ to $1_{X^t}$.
\item[(ii)]  
Let us consider the ring embeding
\begin{equation}
\label{embed4}
\kappa_4=\Delta_{4,X^t}\circ \iota^{*}\oplus \Delta_{4,X}\circ \iota: O \hookrightarrow
\End((X^t)^4)\oplus \End(X^4) \subset \End\left((X^t)^4\times X^4\right)=\End((X\times X^t)^4),
\end{equation}
$$e \mapsto \left(\Delta_{4,X^t}(\iota^{*}(e)),\Delta_{4,X}(\iota(e)\right)\in
\End((X^t)^4)\oplus \End(X^4)\subset \End\left((X^t)^4\times X^4\right).$$
Then there exists a principal polarization $\mu$ on $X^4 \times (X^t)^4 =(X \times X^t)^4$ that is defined over $K$ and enjoys the following properties.

\begin{equation}
\label{kappa4O}
\mu\circ \kappa_4(u)= \kappa_4(u^{*})^t \circ \mu \ \forall u \in O.
\end{equation}
\end{itemize}
\end{thm}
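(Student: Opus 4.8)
Part (i) is purely formal: the map $e\mapsto e^{*}$ is an involutive antiautomorphism of $O$, the map $u\mapsto u^{t}$ is an antiisomorphism $\End(X)\to\End(X^t)$, and $\iota$ is a ring homomorphism, so the composite $\iota^{*}$ is a ring homomorphism; it is injective because each of the three factors is, and $\iota^{*}(1)=\iota(1^{*})^{t}=\iota(1)^{t}=(1_X)^{t}=1_{X^t}$.

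For (ii) the plan is to take for $\mu$ the principal polarization on $(X\times X^t)^4$ furnished by the classical quaternion trick, and then to check that it intertwines the $O$-actions. Recall the construction. Since $\ker(\lambda)$ is finite there is a positive integer $n$ with $\ker(\lambda)\subseteq\ker(n_X)$; equivalently there is a unique isogeny $\phi\colon X^t\to X$ with $\phi\circ\lambda=n_X$, and then automatically $\lambda\circ\phi=n_{X^t}$, so $\phi=n\lambda^{-1}$ in $\Hom(X^t,X)\otimes\Q$. By Lagrange's four-square theorem $n$ is a sum of four integer squares, which produces a matrix $\gamma\in\Mat_4(\Z)$ of left multiplication by a Lipschitz quaternion of norm $n$, so that $\gamma\gamma^{t}=\gamma^{t}\gamma=n\cdot I_4$. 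The quaternion trick then exhibits a principal polarization $\mu$ on $(X\times X^t)^4$; identifying $(X\times X^t)^4$ with $(X^t)^4\times X^4$ as in \eqref{embed4} and its dual with $X^4\times(X^t)^4$, one can describe $\mu$ as a $2\times2$ block homomorphism $(X^t)^4\times X^4\to X^4\times(X^t)^4$ whose blocks are built, block by block, only out of the polarization $\lambda^4\colon X^4\to(X^t)^4$, the isogeny $\phi^4\colon(X^t)^4\to X^4$, the matrix $\gamma$ (regarded inside $\Mat_4(\Z)\subset\End(X^4)$ or inside $\Mat_4(\Z)\subset\End((X^t)^4)$ according to type), and integers. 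That this $\mu$ is self-dual, of degree $1$, and positive (positivity being where the positive-definiteness of the Hamilton quaternion algebra enters) is the classical statement; it involves no endomorphisms and I take it for granted. Note, moreover, that $\phi$ is determined by $\lambda$ and $n$ and that any $\gamma\in\Mat_4(\Z)$ will do for what follows, so the auxiliary choices made in the construction do not affect $O$-compatibility.

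The new content is \eqref{kappa4O}, for which I would assemble three intertwining relations and then compare blocks. (a) By the remark in the introduction, $\Delta_{4,X}(\iota(O))$ commutes with $\Mat_4(\Z)\subset\End(X^4)$ and $\Delta_{4,X^t}(\iota^{*}(O))$ commutes with $\Mat_4(\Z)\subset\End((X^t)^4)$, so $\gamma$ commutes with both diagonal actions. (b) Hypothesis \eqref{lambdaO} together with \eqref{iota8} says $\lambda\circ\iota(e)=\iota^{*}(e)\circ\lambda$ in $\Hom(X,X^t)$, hence $\lambda^4\circ\Delta_{4,X}(\iota(e))=\Delta_{4,X^t}(\iota^{*}(e))\circ\lambda^4$. (c) Inverting $\lambda$ over $\Q$ turns (b) into $\lambda^{-1}\circ\iota^{*}(e)=\iota(e)\circ\lambda^{-1}$; multiplying by $n$ and using $n\lambda^{-1}=\phi$ gives the genuine identity $\phi\circ\iota^{*}(e)=\iota(e)\circ\phi$ in $\Hom(X^t,X)$, hence $\phi^4\circ\Delta_{4,X^t}(\iota^{*}(e))=\Delta_{4,X}(\iota(e))\circ\phi^4$. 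Finally, from \eqref{iota8} and $(u^{*})^{*}=u$ one computes that $\kappa_4(u^{*})^{t}$ acts on $((X^t)^4\times X^4)^{t}=X^4\times(X^t)^4$ as $\Delta_{4,X}(\iota(u))$ on the $X^4$-factor and $\Delta_{4,X^t}(\iota^{*}(u))$ on the $(X^t)^4$-factor. Writing $\mu$ in blocks and applying (a)--(c) to each of the four blocks then yields $\mu\circ\kappa_4(u)=\kappa_4(u^{*})^{t}\circ\mu$.

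The step I expect to be most delicate is this last one: it requires the explicit block form of the quaternion-trick polarization and careful tracking of how the antiautomorphism $*$ interacts with the two dualities $u\mapsto u^{t}$ and $\mu\mapsto\mu^{t}$, so that the four block identities match \eqref{kappa4O} exactly rather than up to an unwanted transpose. The geometric input — that $\mu$ is a polarization of degree $1$ — is the classical quaternion trick and is unaffected by the presence of $O$.
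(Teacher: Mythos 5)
Part (i) is fine and matches the paper's remark. For part (ii), however, the central step --- the block-by-block verification of \eqref{kappa4O} --- is announced but never carried out, and the version of the ``classical'' construction you propose to rely on is not the correct one. You take $n$ with $\ker(\lambda)\subset \ker(n_X)$ and invoke Lagrange to write $n$ \emph{itself} as a sum of four squares, obtaining $\gamma\gamma^{\mathrm{tr}}=n\cdot I_4$. The quaternion trick actually requires integers $a,b,c,d$ with $s=a^2+b^2+c^2+d^2\equiv -1 \pmod n$: this congruence is what makes the graph of the quaternion matrix inside $\ker(\lambda^4)\times\ker(\lambda^4)$ isotropic for the Riemann form (the relevant pairing value is $e_{\lambda^4}(x,y)^{1+s}$), and it is also what makes your hoped-for block description exist. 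Indeed, solving $\pi^{t}\circ\mu\circ\pi=\lambda^{8}$ over $\Q$ one finds that the $(1,1)$ block of $\mu$ is $(1+s)(\lambda^{4})^{-1}=\frac{1+s}{n}\,\phi^{4}$, a genuine homomorphism precisely because $n\mid(1+s)$, and the ``determinant'' is $(1+s)-s=1$; with your normalization $s=n$ this block is $\frac{1+n}{n}\phi^{4}$, which is not a morphism, and the candidate matrix with $(1,1)$ block $\phi^4$ has determinant $n-n=0$. Relatedly, the claim that ``any $\gamma\in\Mat_4(\Z)$ will do'' is true for $O$-equivariance but false for $\mu$ being a principal polarization. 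So the object whose four blocks you intend to analyze is not pinned down, and the four block identities --- which you yourself flag as the delicate point --- are never checked. As it stands this is a plan, not a proof.

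Your intertwining facts (a)--(c) are nonetheless exactly the right ones, and the argument can be completed along your lines once $\mu$ is written explicitly (up to signs, as the block matrix with entries $\frac{1+s}{n}\phi^4$, $\pm$ the transposed quaternion matrix, $\pm$ the quaternion matrix, and $\lambda^4$, with $s\equiv-1\pmod n$). But it is worth noting that the paper sidesteps the block computation entirely. It defines the isogeny $\pi: X^8\to (X^t)^4\times X^4$, $(x,y)\mapsto(\lambda^4(x),\I(x)-y)$, whose kernel is the graph of $\I$ on $\ker(\lambda^4)$, obtains the principal polarization $\mu$ by descent from the relation $\pi^{t}\circ\mu\circ\pi=\lambda^{8}$, and then proves a general transport statement (Proposition \ref{polDescent}): if $\lambda^8$ satisfies the Rosati compatibility \eqref{lambdaO} with respect to $j=\Delta_{8,X}\circ\iota$ and the $O$-action descends along $\pi$, i.e.\ $\kappa_4(e)\circ\pi=\pi\circ j(e)$, then $\mu$ satisfies \eqref{kappa4O}. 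The only computation needed is this last intertwining of $\pi$, which uses precisely your facts (a) and (b) and nothing else; your fact (c) and the explicit shape of $\mu$ are never required. If you want to salvage your route, supply the explicit $\mu$ with the correct congruence and do the four block checks; otherwise, the descent lemma is the cleaner path.
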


\begin{rem}
Clearly, $\left(1_X\right)^t=1_{X^t}$. Since both  maps
$$\End(X) \to \End(X^t), \ u \to u^t \  \text{ and } \  O \to O, \ e \mapsto e^{*}$$
are ring antiisomorphisms, and $\iota: O \to \End(X)$ is a ring embedding that sends $1$ to $1_X$, the composition
$$O \to \End(X^t), \  e \mapsto \left(\iota(e^{*})\right)^t$$
is obviously a ring embedding that sends $1$ to $1_{X^t}$, which proves Theorem \ref{mainEq}(i). The rest of the paper is devoted 
to the proof of Theorem \ref{mainEq}(ii).
\end{rem}

\begin{rems}
\label{lambdaM}
We keep the notation and assumptions of Theorem \ref{mainEq}. 

\begin{itemize}
\item
Formula \eqref{lambdaO} implies that for every positive integer $m$
\begin{equation}
\label{lambdaOM}
\lambda^m \circ \Delta_{m,X}(\iota(e))= (\Delta_{m,X^t}(\iota(e^{*}))^t \circ\lambda^m \ \forall e \in O.
\end{equation}
\item
It is well known \cite{MumfordAV} that the additive group of $\End(X)$ is a  free $\Z$-module of finite rank.  Since $\iota$ is an embedding, the additive group of $O$ is also  a free $\Z$-module of finite rank. 
\end{itemize}
\end{rems}

\begin{rem}
When $O=\Z$, the assertion of Theorem \ref{mainEq} is a well known {\sl quaternion trick} \cite[Lemma 2.5]{ZarhinMatZam},
\cite[Sect. 5]{ZarhinInv85}, \cite[Sect.  1.13 and 7]{ZarhinG}. (See \cite[Ch. IX, Sect. 1]{MB} where
Deligne's proof is given.) 
\end{rem}

This note may be viewed as a natural continuation of \cite{ZarhinG}. In particular, we freely use a (more or less) standard notation from  \cite{ZarhinG}. 

The paper is organized as follows.
Section \ref{polIs} contains auxiliary results that deal with interrelations between polarizations, isogenies and endomorphisms of abelian varieties.  In Section \ref{BC} we compare the situation over $K$ and over its fixed algebraic closure $\bar{K}$.
We prove Theorem \ref{mainEq} (ii) in Section \ref{ProofM}.

{\bf Acknowledgements}. I am grateful to  Yujie Xu for an interesting question; this note is a result of my attempts to answer it. My special thanks go to both referees, whose comments helped to improve the exposition.

\section{Polarizations and isogenies}
\label{polIs}

\begin{prop}
\label{polDescent}
Let $(Y, \lambda)$ be a polarized abelian variety over $K$. Let $\pi: Y \to Z$ be a $K$-isogeny of abelian varieties over $K$.  Suppose that there exists a polarization $\mu: Z \to Z^t$ that is defined over $K$ and such that  
\begin{equation}
\label{lambdaMu}
\lambda=\pi^t \circ \mu \circ \pi .
\end{equation}
Let $D$ be an associative ring with $1$
endowed with an involutive  antiautomorphism
$$D \to D, \ e \mapsto e^{*}.$$
Suppose that we are given an injective ring embedding
$j: D \to \End(Y)$ 
that sends $1$ to $1_X$ and such that in $\Hom(Y,Y^t)$
\begin{equation}
\label{lambdaJ}
\lambda \circ j(e)= j(e^{*})^t \circ\lambda \ \forall e \in D.
\end{equation}
Suppose that there exists a ring embedding
 $j_{\pi}:D \hookrightarrow \End(Z)$  that sends $1$ to $1_Z$ and such that
 \begin{equation}
\label{Desc}
 j_{\pi}(e)\circ \pi=\pi \circ j(e)  \ \forall e \in D.
 \end{equation}
 Then
\begin{equation}
\label{muJpi}
\mu \circ j_{\pi}(e)= j_{\pi}(e^{*})^t \circ\mu \ \forall e \in D.
\end{equation}
\end{prop}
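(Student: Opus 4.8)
The plan is to deduce the desired identity \eqref{muJpi} in $\Hom(Z,Z^t)$ from the hypothesis \eqref{lambdaJ} in $\Hom(Y,Y^t)$ by ``pulling back along $\pi$''. The key auxiliary observation is that the homomorphism of abelian groups
$$\Hom(Z,Z^t)\to \Hom(Y,Y^t),\qquad g\mapsto \pi^t\circ g\circ \pi$$
is injective. Indeed, since $\pi$ is an isogeny, it is invertible in the category of abelian varieties up to isogeny, so the displayed map becomes an isomorphism after tensoring with $\Q$; as $\Hom(Z,Z^t)$ is a free $\Z$-module of finite rank, in particular torsion-free, the map itself is injective. (Alternatively: if $\pi^t\circ g\circ \pi=0$ then $\pi^t\circ g=0$ because $\pi$ is surjective, and then $g=0$ because $\pi^t:Z^t\to Y^t$ is again an isogeny, so its kernel is a finite group scheme and cannot contain the nonzero abelian subvariety that would be the image of a nonzero $g$.)

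Granting this, it suffices to check that $\pi^t\circ\bigl(\mu\circ j_{\pi}(e)\bigr)\circ \pi=\pi^t\circ\bigl(j_{\pi}(e^{*})^t\circ\mu\bigr)\circ \pi$ for all $e\in D$. Starting from the left-hand side and using \eqref{Desc} in the form $j_{\pi}(e)\circ\pi=\pi\circ j(e)$, then \eqref{lambdaMu}, then the hypothesis \eqref{lambdaJ}, and then \eqref{lambdaMu} again, we get
$$\pi^t\circ\mu\circ j_{\pi}(e)\circ \pi=\pi^t\circ\mu\circ\pi\circ j(e)=\lambda\circ j(e)=j(e^{*})^t\circ\lambda=j(e^{*})^t\circ\pi^t\circ\mu\circ\pi .$$
Finally, by contravariant functoriality of duality together with \eqref{Desc},
$$j(e^{*})^t\circ\pi^t=\bigl(\pi\circ j(e^{*})\bigr)^t=\bigl(j_{\pi}(e^{*})\circ\pi\bigr)^t=\pi^t\circ j_{\pi}(e^{*})^t,$$
so the chain of equalities becomes $\pi^t\circ\mu\circ j_{\pi}(e)\circ\pi=\pi^t\circ j_{\pi}(e^{*})^t\circ\mu\circ\pi$, which is exactly what we needed; the injectivity observation then yields \eqref{muJpi}.

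The computation proper is a purely formal manipulation of the three given relations \eqref{lambdaMu}, \eqref{lambdaJ}, \eqref{Desc} and the standard rule $(f\circ g)^t=g^t\circ f^t$. The only step that is not bookkeeping — and the one I expect to be the crux — is the injectivity of $g\mapsto \pi^t\circ g\circ\pi$, i.e.\ the fact that $\pi$ and its dual $\pi^t$ are invertible in the isogeny category; this is precisely where the hypothesis that $\pi$ is an \emph{isogeny} (not merely a homomorphism) is used.
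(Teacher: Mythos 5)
Your proposal is correct and follows essentially the same route as the paper: the identical chain of substitutions using \eqref{Desc}, \eqref{lambdaMu}, \eqref{lambdaJ} and the rule $(f\circ g)^t=g^t\circ f^t$, followed by cancelling $\pi$ on the right and $\pi^t$ on the left (the paper phrases this as ``dividing by the isogeny,'' which is exactly your injectivity of $g\mapsto \pi^t\circ g\circ\pi$). Your explicit justification of that cancellation step is a welcome addition, but the argument is the same.
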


\begin{proof}
Let $u \in D$.
Plugging in  formula  \eqref{lambdaMu} for $\lambda$ into \eqref{lambdaJ}, we obtain
$$\pi^t \circ\mu \circ\pi \circ j(e)= j(e^{*})^t \circ \pi^t\circ  \mu \circ \pi.$$
Formula  \eqref{Desc} allows us to replace $\pi \circ j(e)$ by $j_{\pi}(e)\circ \pi$ and get
$$\pi^t \circ  \mu \circ  j_{\pi}(e)\circ \pi= j(e^{*})^t \circ \pi^t \circ \mu\circ \pi.$$
Dividing both sides by isogeny $\pi$ from the right, we get
$$\pi^t \circ\mu\circ  j_{\pi}(e)= j(e^{*})^t \circ \pi^t \circ  \mu .$$
Taking into account that $j(e^{*})^t \circ \pi^t =(\pi \circ j(e^{*}))^t$, we get
\begin{equation}
\label{interm}
\pi^t \circ \mu \circ  j_{\pi}(e)= (\pi \circ j(e^{*}))^t \circ \mu.
\end{equation}
Applying \eqref{Desc} to $e^{*}$ instead of $e$, we get 
$j_{\pi}(e^{*})\circ \pi=\pi \circ j(e^{*})$. Combining this with \eqref{interm}, we obtain
$$ \pi^t \circ \mu \circ  j_{\pi}(e)= (j_{\pi}(e^{*})\circ \pi)^t \circ \mu =
 \pi^t\circ j_{\pi}(e^{*})^t \circ \mu$$
 and therefore
 $$ \pi^t \circ \mu \circ  j_{\pi}(e)= \pi^t\circ j_{\pi}(e^{*})^t \circ \mu.$$
 Dividing both sides by isogeny $\pi^t$ from the left, we get
 $$\mu  \circ  j_{\pi}(e) =
 j_{\pi}(e^{*})^t \circ \mu,$$
 which proves the desired formula \eqref{muJpi}.
\end{proof}

\section{Base change}
\label{BC}

In what follows,  $\bar{K}$ stands for a fixed algebraic closure of $K$. If $X$ (resp. $W$) is an algebraic variety (resp. group scheme) over $K$ then we write $\bar{X}$ (resp. $\bar{W}$) for the corresponding algebraic variety (resp. group scheme) over $\bar{K}$. Similarly, if $f$ is a morphism of $K$-varieties (resp. group schemes) then we write $\bar{f}$ for the corresponding morphism of algebraic varieties (resp. group schemes) over $\bar{K}$. In particular, if $X$ is an abelian variety with  $K$-polarization $\lambda: X \to X^t$  then
\begin{equation}
\label{barL}
\bar{\lambda}: \bar{X}\to \overline{X^t}=\bar{X}^t
\end{equation}
is a polarization of $\bar{X}$,  and
\begin{equation}
\label{barD}
\deg(\bar{\lambda})=\deg(\lambda), \  \ker(\bar{\lambda})=\overline{\ker(\lambda)}, 
\end{equation}
$$\overline{X^m}=\bar{X}^m, \ \overline{X^m}^t=(\overline{X^t})^m, \ \overline{\lambda^m}=\bar{\lambda}^m$$
for all  positive integers  $m.$

If $W$ is a finite commutative group scheme then $\bar{W}$ is a finite commutative group scheme over $\bar{K}$ and the {\sl orders} of $W$ and $\bar{W}$ coincide.  We have
\begin{equation}
\label{Wpower}
\bar{W}^m=\overline{W^m} \ \ \forall m.
\end{equation}
In addition, if $d$ is the order of $W$ then the orders  of $\bar{W}^m$  and $W^m$ both equal $d^m$. (See  \cite{DG,Shatz,HM,OS,Pink,ZarhinG} for a furher discussion of commutative finite group schemes over fields.)

\section{Quaternion trick}
\label{ProofM}
In what follows, we freely use the notation and assertions of Section \ref{BC}.

\begin{proof}[Proof of Theorem \ref{mainEq}]

Let us put $g:=\dim(X)$. We may assume that $g\ge 1$.
Recall that $\lambda$ is an isogeny and therefore $\ker(\lambda)$ is a finite group subscheme  in $X$. Let $n:=\deg(\lambda)$. Then  $\ker(\lambda)$ has order $n$ and therefore is killed by multiplication by $n$,
see \cite{OT}.

Choose a quadruple of integers $a,b,c,d$ such that $$
s:=a^2+b^2+c^2+d^2$$ is congruent to $-1$ modulo $n$. (In particular, $s \ne 0$.) We denote by
$\I$ the ``quaternion"
$$\I=
\begin{pmatrix}
a & -b & -c & -d\\
b & \ a & \ d &\  -c\\
c & -d & \ a & \ b\\
d & \ c & -b & \ a\\
\end{pmatrix}
\in \Mat_4(\Z)\subset \Mat_4(\End(X))=\End(X^4).$$ 

Following \cite[pp. 330-331]{ZarhinG}, let us consider a finite group subscheme  $$V\subset \ker(\lambda^4)\times \ker(\lambda^4)
\subset X^4 \times X^4=X^8$$ that is the graph of
$$\I: \ker(\lambda^4) \to \ker(\lambda^4).$$
In particular, $V$ and $\ker(\lambda^4)$ are isomorphic finite group schemes over $K$ and therefore have the same order, namely, $n^4$.
Clearly, 
$$\bar{V}\subset \ker(\bar{\lambda}^4)\times \ker(\bar{\lambda}^4)=\ker(\bar{\lambda}^8),$$
 the orders of  isomorphic finite group $\bar{K}$-schemes $\bar{V}$ and $ \ker(\bar{\lambda}^4)$ coincide and also  equal $n^4$.  It is  checked in  \cite[pp. 330--331]{ZarhinG}  that $\bar{V}$ is an {\sl isotropic} finite group subscheme in $\ker(\bar{\lambda}^8)$ with respect to the {\sl Riemann form} \cite[Sect. 23]{MumfordAV}
$$e_{\bar{\lambda}^8}: \ker(\bar{\lambda}^8)\times \ker(\bar{\lambda}^8) \to \mathbb{G}_{\mathrm{m}}$$
attached to the polarization 
$$\bar{\lambda}^8:\bar{X}^8 \to \left(\bar{X}^8\right)^t$$
of $\bar{X}^8=\overline{X^8}$.  (Here $\mathbb{G}_{\mathrm{m}}$ is the {\sl multiplicative} group scheme over $\bar{K}$.) Since the order of   $\bar{V}$ is $n^4=\sqrt{n^8}$, it is is the {\sl square root} of the order of $ \ker(\bar{\lambda}^8)$. This means that  $\bar{V}$ is a {\sl maximal isotropic} finite group subscheme of $\ker(\bar{\lambda}^8)$.

Let us consider a $K$-morphism of $8g$-dimensional abelian varieties
\begin{equation}
\label{piX8}
\pi: X^8=X^4 \times X^4\to (X^t)^4 \times X^4, \ (x_4,y_4) 
\mapsto (\lambda^4(x_4), \I(x_4)-y_4) \ \forall x_4,y_4 \in X^4.
\end{equation}
Clearly, $\ker(\pi)=V$, which is a finite group scheme, hence $\pi$ is an {\sl isogeny}
and  
$$X^4\times (X^t)^4 \cong X^8/V.$$
  In light of {\sl descent theory} \cite[Sect. 1.13]{ZarhinG} (applied to $X^8, \lambda^8, (X^t)^4\times X^4$ instead of $X,\lambda,Y$), 
the {\sl maximal isotropy} of $\bar{V}$ implies  that there exists a {\sl principal polarization} 
$$\mu:(X^t)^4\times X^4 \to  \left((X^t)^4\times X^4\right)^t$$
on  $(X^t)^4\times X^4$ that is defined over $K$ and such that
 \begin{equation}
 \label{lambda8mu}
\pi^t \circ \mu \circ \pi=\lambda^8: X^8 \to (X^8)^t=(X^t)^8.
\end{equation}
Let us consider the ring embeddings
$$j=\Delta_{8,X}\circ \iota: O \to \End(X^8)=\Mat_2(\End(X^4)), \ 
e \mapsto  
\begin{pmatrix}
\Delta_{4,X}(\iota(e)) & 0\\
0 & \Delta_{4,X}(\iota(e))\\
\end{pmatrix} $$
and
$$j_{\pi}= \kappa_4: O \to \Mat_4(\End(X^t)\oplus \Mat_4(\End(X))=\End\left((X^t)^4\right)\oplus \End(X^4)\subset \End\left((X^t)^4\times X^4\right),$$
$$e \mapsto \left(\Delta_{4,X^t}(\iota^{*}(e)), \Delta_4(\iota(e))\right)=
\begin{pmatrix}
\Delta_{4,X^t}(\iota^{*}(e)) & 0\\
0 & \Delta_{4,X}(\iota(e))\\
\end{pmatrix}
.$$
Let us  put $Y=X^8$, $Z= (X^t)^4\times X^4$,  $D=O$,  $m=4$ and check
that  $j$ and $j_{\pi}$  enjoy property \eqref{Desc}.
First, notice that the matrix
$$\I \in \Mat_4(\Z)\subset \Mat_4(\End(X))=\End(X^4)$$
{\sl commutes} with  the ``scalar'' matrix 
$$\Delta_{4,X}(\iota(e))=
\begin{pmatrix}
\iota(e) & 0 & 0& 0\\
0& \iota(e) &0 & 0\\
0&0& \iota(e) & 0\\
0 & 0& 0& \iota(e)\\
\end{pmatrix}
\in \Mat_4\left(\End(X)\right)=\End\left(X^4\right),$$
i.e.
\begin{equation}
\label{Icommute}
\I \circ \Delta_{4,X}(\iota(e))=\Delta_{4,X}(\iota(e))\circ \I.
\end{equation}
Second, plugging $m=4$ in \eqref{lambdaOM}, we get
\begin{equation}
\label{lambdaOM4}
\lambda^4\circ \Delta_{4,X}(\iota(e))=\Delta_{4,X^t}(\iota^{*}(e))\circ \lambda^4 \ \forall e\in O.
\end{equation}
Third, if 
$$e \in O, \  (x_4,y_4) \in X^4(\bar{K})\times X^4(\bar{K})=X^8(\bar{K})$$
then
$$\pi\circ j(e) (x_4,y_4)=\pi \left(\Delta_{4,X}(\iota(e))(x_4), \Delta_{4,X}(\iota(e))(y_4)\right)=$$
$$\left(\lambda^4\circ \Delta_{4,X}(\iota(e))(x_4), \I\circ \Delta_{4,X}(\iota(e))(x_4)-\Delta_{4,X}(\iota(e))(y_4)\right).$$
Taking into account  equalities \eqref{lambdaOM4}  and   \eqref{Icommute}, we obtain that
$$\pi\circ j(e) (x_4,y_4)=\left(\Delta_{4,X^t}(\iota^{*}(e))\circ \lambda^4(x_4),  \Delta_{4,X}(\iota(e))\I(x_4)-\Delta_{4,X}(\iota(e))(y_4)\right)$$
$$=\begin{pmatrix}
\Delta_{4,X^t}(\iota^{*}(e)) & 0\\
0 & \Delta_{4,X}(\iota(e))\\
\end{pmatrix}
\left(\lambda^4(x_4), \I(x_4)-y_4\right)=
\begin{pmatrix}
\Delta_{4,X^t}(\iota^{*}(e)) & 0\\
0 & \Delta_{4,X}(\iota(e))\\
\end{pmatrix}
\circ \pi \left(x_4,y_4\right)$$
$$=j_{\pi}(e) \circ \pi \left(x_4,y_4\right).$$
This means that
$$ j_{\pi}(e)\circ \pi=\pi \circ j(e)  \ \forall e \in O.$$
  Now the desired result follows from \eqref{lambda8mu} combined with Proposition \ref{polDescent} applied to  $\lambda^8$ (instead of  $\lambda$).
\end{proof}

\end{document}